\documentclass[11pt]{article}

\usepackage[a4paper,margin=30mm]{geometry}
\usepackage{amsmath,amssymb,amsthm}
\usepackage{aliascnt}
\usepackage{microtype}
\usepackage[colorlinks=true,linkcolor=blue,citecolor=blue,urlcolor=blue]{hyperref}
\usepackage[nameinlink,noabbrev]{cleveref}

\crefname{theorem}{theorem}{theorems}
\Crefname{theorem}{Theorem}{Theorems}
\crefname{proposition}{proposition}{propositions}
\Crefname{proposition}{Proposition}{Propositions}
\crefname{lemma}{lemma}{lemmas}
\Crefname{lemma}{Lemma}{Lemmas}
\crefname{corollary}{corollary}{corollaries}
\Crefname{corollary}{Corollary}{Corollaries}
\crefname{remark}{remark}{remarks}
\Crefname{remark}{Remark}{Remarks}

\newcommand{\C}{\mathbb C}
\newcommand{\N}{\mathbb N}
\newcommand{\Her}{\operatorname{Her}}
\newcommand{\Mat}{\operatorname{Mat}}
\newcommand{\Lor}{\mathcal L}
\newcommand{\Pos}{\operatorname{Pos}}
\newcommand{\Sym}{\operatorname{Sym}}
\newcommand{\rank}{\operatorname{rank}}
\newcommand{\spann}{\operatorname{span}}
\newcommand{\intr}{\operatorname{In}}
\newcommand{\ot}{\otimes}
\newcommand{\ol}[1]{\overline{#1}}

\theoremstyle{plain}
\newtheorem{theorem}{Theorem}[section]
\newaliascnt{proposition}{theorem}
\newtheorem{proposition}[proposition]{Proposition}
\aliascntresetthe{proposition}
\newaliascnt{lemma}{theorem}
\newtheorem{lemma}[lemma]{Lemma}
\aliascntresetthe{lemma}
\newaliascnt{corollary}{theorem}
\newtheorem{corollary}[corollary]{Corollary}
\aliascntresetthe{corollary}

\theoremstyle{remark}
\newaliascnt{remark}{theorem}
\newtheorem{remark}[remark]{Remark}
\aliascntresetthe{remark}

\title{Non-Subhomogeneity of Minimal Operator Systems over Positive Semidefinite and Lorentz Cones}
\author{Tim Netzer}
\date{\today}

\begin{document}
\maketitle

\begin{abstract}
The minimal operator systems over $\Mat_k(\C)_+$, $k\geqslant2$, and the
Lorentz cones $\Lor_m$, $m\geqslant4$, are not subhomogeneous.  For the
$2\times2$ cone we construct extreme positive maps of arbitrarily large
output dimension.  Positive retracts give the remaining cases.  Equivalently,
for fixed $k\geqslant2$ and $d$, there exist $s>d$ and an entangled positive
operator on $\C^k\ot\C^s$ such that every compression of the second factor to
$\C^d$ is separable.
\end{abstract}

\paragraph{AI Declaration.}
The results in this paper were produced almost entirely by an artificial
intelligence system (ChatGPT 5.6 Sol).  I posed the question and suggested
possible approaches.  I evaluated the output, clarified and simplified the
arguments, checked them for correctness, and edited the final text.  There is
no settled convention for attributing such work.  I therefore do not claim
ownership and will not submit the paper to a mathematical journal.  Comments
and corrections are welcome at
\href{mailto:tim.netzer@uibk.ac.at}{tim.netzer@uibk.ac.at}.

\section{Introduction}

An embedding of an operator system into $\Mat_d(\C)$ gives a
finite-dimensional realization.  In matrix-convex language it gives a free
spectrahedral description.  Subhomogeneity allows an embedding into
$\Mat_d(A)$, where $A$ is a commutative $C^*$-algebra, but keeps $d$ fixed.
We prove that the minimal operator systems over the complex positive
semidefinite cones $\Mat_k(\C)_+$, $k\geqslant2$, and the Lorentz cones
$\Lor_m$, $m\geqslant4$, do not have this property.

It suffices to treat $\Mat_2(\C)_+$, which is linearly order-isomorphic to
$\Lor_4$.  Duality reduces this case to the construction of extreme positive
maps from $\Mat_2(\C)$ to $\Mat_s(\C)$ whose value at the identity is
invertible, with $s$ unbounded.  We use Woronowicz's symmetric-power
construction with an explicit alternating diagonal operator.  Positive
retracts then give all larger psd and Lorentz cones.

The psd result answers a question from \cite{DN}.  At level $s$, the minimal
system over $\Mat_k(\C)_+$ is the cone of separable positive operators on
$\C^k\ot\C^s$.  Hence, for every fixed $d$, there are $s>d$ and a mixed
entangled operator $X\geqslant0$ such that
$(I_k\ot V)^*X(I_k\ot V)$ is separable for every isometry
$V\colon\C^d\to\C^s$.

For Lorentz cones the corresponding objects are matrix-convex Euclidean
balls.  If $g\geqslant3$, the Arveson boundary of
$\mathcal W^{\max}(\overline{\mathbb B}_g)$ contains irreducible points of
arbitrarily large matrix size; see \cite{EP}.

\section{Preliminaries}

For a finite-dimensional ordered real vector space $(V,P,u)$, the minimal
operator system over $P$ has cones
\[
  P_s^{\min}
  =\left\{\sum_{\nu=1}^N p_\nu\ot b_\nu\mid
  p_\nu\in P,\ b_\nu\in\Mat_s(\C)_+\right\}.
\]
An operator system $T$ is $d$-subhomogeneous if it admits a unital complete
order embedding into $\Mat_d(A)$ for some commutative $C^*$-algebra $A$.

\begin{lemma}
\label{lem:positive-retract}
Let $(V,P,u)$ and $(W,Q,w)$ be ordered spaces.  Suppose there are unital
positive maps
\[
  \alpha\colon V\to W,
  \qquad \beta\colon W\to V,
  \qquad \beta\alpha=\operatorname{id}_V.
\]
Then $\alpha$ induces a complete order embedding
$P^{\min}\to Q^{\min}$.  Consequently, if $P^{\min}$ is not
$d$-subhomogeneous, then neither is $Q^{\min}$.
\end{lemma}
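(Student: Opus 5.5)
The plan is to use functoriality of the minimal cones under positive maps and to apply it to both $\alpha$ and $\beta$. First I show that every positive map $\phi\colon V\to W$ with $\phi(P)\subseteq Q$ is completely positive from $P^{\min}$ to $Q^{\min}$. This is immediate from the defining formula: a generic element of $P_s^{\min}$ is $\sum_\nu p_\nu\ot b_\nu$ with $p_\nu\in P$ and $b_\nu\in\Mat_s(\C)_+$. Its image under $\phi\ot\operatorname{id}_{\Mat_s}$ is $\sum_\nu \phi(p_\nu)\ot b_\nu$, and since $\phi(p_\nu)\in Q$ this lies in $Q_s^{\min}$. Both $\alpha$ and $\beta$ are unital, so they induce unital completely positive maps between the minimal operator systems. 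Here I use the complexification $V_\C=V\ot_\mathbb{R}\C$, with $\phi$ extended complex-linearly, as the underlying space of the operator system.

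Next I check that $\alpha$ is a complete order embedding. Injectivity is clear, because $\beta\alpha=\operatorname{id}_V$. Suppose $x\in\Mat_s(V_\C)$ satisfies $(\alpha\ot\operatorname{id})(x)\in Q_s^{\min}$. Applying $\beta\ot\operatorname{id}$, which preserves the minimal cones by the first step, gives $x=(\beta\alpha\ot\operatorname{id})(x)\in P_s^{\min}$. So $x\in P_s^{\min}$ if and only if $\alpha_s(x)\in Q_s^{\min}$ at every level $s$. Hence $\alpha$ is a unital complete order isomorphism onto its image, which is an operator subsystem of $Q^{\min}$.

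For the consequence I argue by contraposition. Suppose $Q^{\min}$ admits a unital complete order embedding $\iota\colon Q^{\min}\to\Mat_d(A)$ with $A$ commutative. Then $\iota\circ\alpha$ is a composition of unital complete order embeddings, hence again one, so $P^{\min}$ would be $d$-subhomogeneous, contradicting the hypothesis.

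I do not expect a real obstacle. The only point needing care is the identification of the matrix levels $\Mat_s(V_\C)$ with $V\ot\Mat_s(\C)$, which makes $(\phi\ot\operatorname{id})$ the correct $s$-th amplification. I also need that the order unit $u$ of $P^{\min}$ is sent to $w$, which holds because the maps are unital. Both facts follow from the definitions.
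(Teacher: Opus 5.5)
Your proposal is correct and is the paper's argument: positive maps are automatically completely positive on the minimal systems, so $\beta\alpha=\operatorname{id}_V$ makes $\alpha$ a complete order embedding. A realization of $Q^{\min}$ in $\Mat_d(A)$ then composes with $\alpha$ to give a realization of $P^{\min}$.
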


\begin{proof}
The maps $\alpha$ and $\beta$ are completely positive on the minimal
systems.  Since $\beta\alpha=\operatorname{id}_V$, the map $\alpha$ is a
complete order embedding.  A realization of $Q^{\min}$ in $\Mat_d(A)$ would
therefore restrict to a realization of $P^{\min}$.
\end{proof}
Under the hypotheses of \Cref{lem:positive-retract}, we call $(V,P,u)$ a
unital positive retract of $(W,Q,w)$, or simply call $P$ a retract of $Q$.

For $k\geqslant1$, let $\Mat_k(\C)_+\subseteq\Her_k(\C)$ denote the cone of
positive semidefinite matrices, with order unit $I_k$.  For $m\geqslant2$,
let
\[
  \Lor_m=\{(t,x)\in\mathbb R\times\mathbb R^{m-1}\mid
  t\geqslant\|x\|_2\}
\]
be the Lorentz cone of dimension $m$, with order unit $(1,0)$. 

\begin{lemma}\label{lem:basic-retracts}
The cones $\Mat_2(\C)_+$ and $\Lor_4$, with their specified order units,
are unital order isomorphic.  Moreover, for every $k\geqslant2$ and
$m\geqslant4$, the cone $\Mat_2(\C)_+$ is a retract of
$\Mat_k(\C)_+$, and $\Lor_4$ is a retract of $\Lor_m$.
\end{lemma}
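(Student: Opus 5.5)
We give explicit maps in each of the three parts. Positivity of each map will be checked directly, and the identity $\beta\alpha=\operatorname{id}$ by inspection.

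\emph{Isomorphism $\Mat_2(\C)_+\cong\Lor_4$.} Use the Pauli basis. Define $\phi\colon\mathbb R\times\mathbb R^3\to\Her_2(\C)$ by
\[
  \phi(t,x)=\begin{pmatrix} t+x_3 & x_1-ix_2\\ x_1+ix_2 & t-x_3\end{pmatrix}.
\]
This is a real linear bijection with $\phi(1,0)=I_2$. The eigenvalues of $\phi(t,x)$ are $t\pm\|x\|_2$, since the trace is $2t$ and the determinant is $t^2-\|x\|_2^2$. Hence $\phi(t,x)\geqslant0$ if and only if $t\geqslant\|x\|_2$, and $\phi$ is a unital order isomorphism.

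\emph{Retract for psd cones, $k\geqslant2$.} Let $\alpha\colon\Her_2(\C)\to\Her_k(\C)$ be
\[
  \alpha(A)=A\oplus\tfrac12\operatorname{tr}(A)\,I_{k-2}.
\]
Let $\beta\colon\Her_k(\C)\to\Her_2(\C)$ be compression to the upper left $2\times2$ corner, $\beta(B)=E^*BE$, where $E\colon\C^2\to\C^k$ is the inclusion of the first two coordinates.
\begin{itemize}
\item Both maps are unital, because $\tfrac12\operatorname{tr}(I_2)=1$.
\item $\alpha$ is positive, since $A\geqslant0$ implies $\operatorname{tr}(A)\geqslant0$.
\item $\beta$ is positive, being a compression.
\item $\beta\alpha(A)=A$ is immediate.
\end{itemize}
For $k=2$ take both maps to be the identity.

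\emph{Retract for Lorentz cones, $m\geqslant4$.} Take $\alpha(t,x)=(t,x,0)\in\mathbb R\times\mathbb R^{m-1}$, padding with $m-4$ zeros, and let $\beta$ be the coordinate projection onto $(t,x_1,x_2,x_3)$. Both maps fix the unit. For positivity:
\begin{itemize}
\item $\alpha$ preserves the Euclidean norm of the spatial part, so it maps $\Lor_4$ into $\Lor_m$.
\item $\beta$ does not increase that norm, so it maps $\Lor_m$ into $\Lor_4$.
\end{itemize}
Finally, $\beta\alpha=\operatorname{id}$.

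None of these steps is a real obstacle. The only point needing care is that $\alpha$ in the psd case must be unital. Padding $A$ with zeros would not send $I_2$ to $I_k$, which is why the normalized trace term $\tfrac12\operatorname{tr}(A)\,I_{k-2}$ is used instead.
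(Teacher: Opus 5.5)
Your proof is correct and follows the paper's proof exactly: the same Pauli-type isomorphism $\chi$, the same pair $\alpha_k(a)=a\oplus\frac{\operatorname{tr}(a)}{2}I_{k-2}$ with corner compression $\beta_k$, and the same zero-padding and coordinate-projection maps $j_m,q_m$ for the Lorentz cones. The only difference is that you write out the verifications the paper leaves implicit, namely the eigenvalues $t\pm\|x\|_2$ and the individual positivity and unitality checks.
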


\begin{proof}
Define $\chi\colon\mathbb R^4\to\Her_2(\C)$ by
\[
  \chi(t,x_1,x_2,x_3)
  =\begin{pmatrix}
      t+x_3&x_1-ix_2\\
      x_1+ix_2&t-x_3
    \end{pmatrix}
\]
Then $\chi$ is a unital order isomorphism.

For $k\geqslant2$, define
\begin{align*}
  \alpha_k\colon\Her_2(\C)&\to\Her_k(\C),
  &\alpha_k(a)&=a\oplus\frac{\operatorname{tr}(a)}2I_{k-2},\\
  \beta_k\colon\Her_k(\C)&\to\Her_2(\C),
  &\beta_k(b)&=\text{the upper-left $2\times2$ corner of $b$}.
\end{align*}
They are unital and positive, and
$\beta_k\alpha_k=\operatorname{id}_{\Her_2(\C)}$.  For $m\geqslant4$, define
\begin{align*}
  j_m(t,x_1,x_2,x_3)
    &=(t,x_1,x_2,x_3,0,\ldots,0),\\
  q_m(t,x_1,\ldots,x_{m-1})
    &=(t,x_1,x_2,x_3).
\end{align*}
These maps are unital and positive, with
$q_mj_m=\operatorname{id}_{\mathbb R^4}$.
\end{proof}

Thus \Cref{lem:positive-retract,lem:basic-retracts} reduce the main result to
$\Mat_2(\C)_+$.

\begin{theorem}\label{thm:main}
For every $k\geqslant2$ and $m\geqslant4$, neither
$\bigl(\Mat_k(\C)_+\bigr)^{\min}$ nor $\Lor_m^{\min}$ is
$d$-subhomogeneous for any $d\in\N$.
\end{theorem}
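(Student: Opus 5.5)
By \Cref{lem:positive-retract,lem:basic-retracts}, it suffices to show that $P^{\min}$, with $P=\Mat_2(\C)_+$, is not $d$-subhomogeneous for any $d$. The plan is to convert $d$-subhomogeneity into a statement about positive maps and then violate it.

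\textbf{Step 1: reduction to boundary representations of bounded size.} Suppose $P^{\min}$ admits a unital complete order embedding into $\Mat_d(A)$ with $A=C(X)$ commutative. Evaluating at the points of $X$ gives a family of unital completely positive maps $\varphi_x\colon P^{\min}\to\Mat_d(\C)$ that jointly detect positivity at every matrix level. Unital completely positive maps out of a minimal system are exactly unital positive maps $\Her_2(\C)\to\Mat_d(\C)$. Hence at level $s$ we would get
\[
  X\in (P^{\min})_s\iff (\varphi\ot\mathrm{id}_s)(X)\geqslant0
  \ \text{for all unital positive }\varphi\colon\Her_2\to\Mat_{d'}(\C),\ d'\leqslant d.
\]
Passing to compressions, this says that the maximal dual cone is generated by maps of size at most $d$. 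Dually, every positive map $\Phi\colon\Her_2(\C)\to\Mat_s(\C)$ should decompose as a sum of compressions $V^*\psi(\cdot)V$ of completely positive maps $\psi$ on the minimal system with $V\colon\C^s\to\C^{d}$. The precise form of this duality is the routine part: identify $(P^{\min}_s)^\vee$ with positive maps into $\Mat_s$ via Choi matrices.

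\textbf{Step 2: an extremality obstruction.} Suppose we have a positive map $\Phi\colon\Mat_2(\C)\to\Mat_s(\C)$ with $s>d$ such that $\Phi$ generates an extreme ray of the cone of positive maps and $\Phi(I)$ is invertible. Then any decomposition as in Step 1 is trivial: every summand is a multiple of $\Phi$. A single summand factors through $\Mat_{d}$, so it has $\rank\Phi(I)\leqslant d<s$. This contradicts invertibility of $\Phi(I)$. The conclusion is that $P^{\min}$ is not $d$-subhomogeneous. Some care is needed to make the finite-sum and closure argument work, but extremality plus the compactness of unital maps should suffice.

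\textbf{Step 3: constructing extreme positive maps with $s$ unbounded.} This is the main obstacle. Following the abstract, I would use Woronowicz's symmetric-power construction. For $a\in\Mat_2(\C)$, consider maps of the form
\[
  \Phi_n(a)=\pi_n(a)+D_n\,\pi_n(a)^{\mathsf T}D_n^*
\]
acting on $\Sym^n(\C^2)\cong\C^{n+1}$, where $\pi_n(a)$ is the compression of $a\ot I$ to the symmetric subspace. Here $D_n$ is an explicit diagonal operator with alternating signs, chosen so that $\Phi_n$ is positive but not decomposable in a degenerate way. A workable variant is to take $\Phi_n(xx^*)$ to be built from $x^{\ot n}$-type vectors and their time-reversed (transpose) partners.

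Positivity should follow by checking the map on rank-one projections $xx^*$: by $SU(2)$-covariance one reduces to $x=e_1$, where $\Phi_n(e_1e_1^*)$ is diagonal and visibly positive. For extremality, I would show that any positive $\Psi\leqslant\Phi_n$ is proportional to $\Phi_n$. Covariance forces $\Psi$ to intertwine the $SU(2)$-action, and the kernel vectors of $\Phi_n(xx^*)$ for every $x$ force the coefficients to match, by a rank/kernel-counting argument in the style of Woronowicz. Invertibility of $\Phi_n(I)$ follows from the diagonal computation. Since $n$ is arbitrary, $s=n+1>d$ can be achieved. Combined with Step 2, this proves the result.
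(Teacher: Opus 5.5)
Your Steps 1 and 2 are the paper's reductions, namely the duality \eqref{eq:positive-map-decomp} and \Cref{lem:rank-obstruction}. The closure issue you flag is harmless: the generating functionals $X\mapsto\langle w,(\varphi\ot\mathrm{id})(X)w\rangle$, with $\varphi$ unital and $\|w\|=1$, form a compact set on which evaluation at the unit equals $1$. So their conic hull is closed, and Carath\'eodory gives finite sums.

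The genuine gap is Step 3: the proposed family cannot work. The map $a\mapsto\pi_n(a)$ is a compression of $a\ot I$, hence completely positive. The map $a\mapsto D_n\pi_n(a)^{\mathsf T}D_n^*$ is completely copositive whatever signs $D_n$ carries, because congruence by any matrix preserves complete (co)positivity. So $\Phi_n$ is decomposable by construction, and decomposability is incompatible with what Step 2 needs. Suppose $\Phi=\Phi_1+\Phi_2$, with $\Phi_1$ CP and $\Phi_2$ co-CP, spans an extreme ray of $\Pos(\Mat_2,\Mat_s)$. Then both summands are multiples of $\Phi$, so $\Phi$ is CP or co-CP. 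Splitting a Kraus decomposition and using extremality once more gives $\Phi(a)=V^*aV$ or $\Phi(a)=V^*a^{\mathsf T}V$ with $V\in\Mat_{2,s}(\C)$. Hence $\rank\Phi(I_2)\leqslant2$. For $s=n+1\geqslant3$ your $\Phi_n$ is therefore never extreme with $\Phi_n(I)$ invertible, and Step 2 has nothing to act on.

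Your extremality sketch is also flawed on its own terms. A positive $\Psi\leqslant\Phi_n$ need not be covariant, so covariance of $\Phi_n$, even where available, does not constrain $\Psi$.

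What is missing is a genuinely non-decomposable map and an extremality proof that does not rely on symmetry. The paper gets both by taking the Choi matrix to be the \emph{inverse} of an indefinite operator,
$\rho=(I_E\ot\iota_n)^*(F\ot\sigma)(I_E\ot\iota_n)$. Here $n=2r+1$, $F$ is the flip, and $\sigma=\operatorname{diag}(1,-\tfrac13,1,\dots)$ on $\Sym^{n-1}(E)$.

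Positivity is an inertia count. $\Phi(xx^*)$ is congruent to $\rho$ restricted to $\rho^{-1}(x\ot H)$, which is the $\rho$-orthogonal complement of $x^\perp\ot H$. From $\intr(\rho)=(3r+3,r+1)$ and $\intr(\rho|_{z\ot H})=(r+1,r,1)$ one gets $\intr(\Phi(xx^*))=(n,0,1)$. So every $\Phi(xx^*)$ is singular, with kernel $\C k_x$, where $k_x=\mu_n(j(x)\ot\sigma(x^{n-1}))$.

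Extremality then comes from \Cref{lem:kernel-span}. The tensors $(xx^*)\ot k_x$ span a space of dimension $3(n+1)=3s$, which is all of $\ker T_\Phi$ once $\Phi$ is normalized to be unital. Irreducibility holds because the kernel lines span $H$ and the unit sphere of $\C^2$ is connected. This kernel-counting is the ``Woronowicz-style'' argument you allude to. It requires maps whose values on rank-one projections have exactly such kernels, which the inverse-Choi construction provides and a CP plus co-CP ansatz does not.
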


\section{Two reductions}

Let $\Phi\colon\Mat_2(\C)\to\Mat_s(\C)$ be linear.  Its Choi matrix
$C_\Phi=\sum_{i,j=0}^1E_{ij}\ot\Phi(E_{ij})$ satisfies
\[
  (\ol x\ot y)^*C_\Phi(\ol x\ot y)
  =y^*\Phi(xx^*)y,
\]
Thus the Jamio\l kowski--Choi correspondence identifies block-positive
matrices with positive maps \cite{Choi,Jamiolkowski}.  Put
$S=\bigl(\Mat_2(\C)_+\bigr)^{\min}$, and let
\[
  S_s^\vee
  =\{C\in\Her_{2s}(\C)\mid
  \operatorname{tr}(CX)\geqslant0\text{ for every }X\in S_s\}
\]
be the dual cone of $S_s$ for the Hilbert--Schmidt pairing.  Then
\[
  S_s^\vee\cong\Pos(\Mat_2,\Mat_s),
\]
where a positive map on $\Her_2(\C)$ is identified with its complex-linear
extension.  By \cite[Theorem~4.1]{DN}, finite-dimensional duality gives
\begin{equation}\label{eq:subhom-duality}
  \begin{aligned}
    S\text{ is }d\text{-subhomogeneous}
    \quad\Longleftrightarrow\quad
    &S^\vee\text{ is generated by }S_d^\vee\\
    &\text{under sums of matrix compressions}.
  \end{aligned}
\end{equation}
The condition on the right means that every
element at every level is a finite sum of compressions of elements of
$S_d^\vee$.  Equivalently, every positive map
$\Phi\colon\Mat_2(\C)\to\Mat_s(\C)$ has a decomposition
\begin{equation}\label{eq:positive-map-decomp}
  \Phi(a)=\sum_\nu v_\nu^*\Psi_\nu(a)v_\nu,
  \qquad \Psi_\nu\colon\Mat_2(\C)\to\Mat_d(\C)\text{ positive},
  \quad v_\nu\in\Mat_{d,s}(\C).
\end{equation}

\begin{lemma}\label{lem:rank-obstruction}
Let $s>d$.  If $\Phi\colon\Mat_2(\C)\to\Mat_s(\C)$ generates an extreme ray of
$\Pos(\Mat_2,\Mat_s)$ and $\Phi(I_2)$ is invertible, then
$S$ is not $d$-subhomogeneous.
\end{lemma}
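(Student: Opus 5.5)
We argue by contradiction.  Suppose $S$ is $d$-subhomogeneous.  By \eqref{eq:subhom-duality}, in the equivalent form \eqref{eq:positive-map-decomp}, the given extreme map decomposes as
\[
  \Phi(a)=\sum_\nu v_\nu^*\Psi_\nu(a)v_\nu,
\]
with $\Psi_\nu\colon\Mat_2(\C)\to\Mat_d(\C)$ positive and $v_\nu\in\Mat_{d,s}(\C)$.  Each summand $\Phi_\nu(a)=v_\nu^*\Psi_\nu(a)v_\nu$ is itself a positive map $\Mat_2(\C)\to\Mat_s(\C)$, since compressions preserve positivity.  Discarding zero summands, we may assume every $\Phi_\nu\neq0$, and there is at least one summand because $\Phi\neq0$.

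Extremality of the ray spanned by $\Phi$ in the cone $\Pos(\Mat_2,\Mat_s)$ then forces each $\Phi_\nu$ to be a nonnegative multiple of $\Phi$.  Indeed, $\Phi=\Phi_\nu+\sum_{\mu\neq\nu}\Phi_\mu$ writes $\Phi$ as a sum of two elements of the cone, so both lie on the ray.  Since $\Phi_\nu\neq0$, we get $\Phi_\nu=\lambda_\nu\Phi$ with $\lambda_\nu>0$.

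Now pick any such $\nu$ and evaluate at the identity:
\[
  \lambda_\nu\Phi(I_2)=v_\nu^*\Psi_\nu(I_2)v_\nu .
\]
The right-hand side factors through $\C^d$, so its rank is at most $d<s$.  The left-hand side is invertible by hypothesis, so its rank is $s$.  This is a contradiction, and hence $S$ is not $d$-subhomogeneous.

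The only delicate point is bookkeeping: the decomposition in \eqref{eq:positive-map-decomp} is exactly the translation of \eqref{eq:subhom-duality} through the identification $S_s^\vee\cong\Pos(\Mat_2,\Mat_s)$.  In particular, compressions of Choi matrices by $I_2\ot v$ correspond to $a\mapsto v^*\Psi(a)v$, and this is assumed from the preceding discussion.  Beyond that, the argument is a direct application of extremality together with a rank count; no serious obstacle is expected.
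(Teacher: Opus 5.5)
Your proof is correct and takes the same route as the paper: decompose $\Phi$ via \eqref{eq:positive-map-decomp}, use extremality to make each nonzero summand a positive multiple of $\Phi$, and reach a contradiction from $\rank\bigl(v_\nu^*\Psi_\nu(I_2)v_\nu\bigr)\leqslant d<s$. You also state explicitly that zero summands can be discarded while at least one nonzero summand remains, a point the paper leaves implicit.
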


\begin{proof}
Suppose $S$ is $d$-subhomogeneous.  By \eqref{eq:subhom-duality} and
\eqref{eq:positive-map-decomp}, write $\Phi=\sum_\nu\Theta_\nu$, where
$\Theta_\nu(a)=v_\nu^*\Psi_\nu(a)v_\nu$.  Extremality gives
$\Theta_\nu=\lambda_\nu\Phi$ for every nonzero summand.  On the other hand,
\[
  \rank\Theta_\nu(I_2)\leqslant\rank v_\nu\leqslant d<s
  =\rank\bigl(\lambda_\nu\Phi(I_2)\bigr),
\]
This is impossible.
\end{proof}

Let $H$ be an $s$-dimensional Hilbert space, and let
$\Phi\colon\Mat_2(\C)\to B(H)$ be positive.  Define
\[
  T_\Phi\colon\Mat_2(\C)\ot H\to H,
  \qquad T_\Phi(a\ot h)=\Phi(a)h.
\]
Also define
\[
  N_\Phi
  =\spann\{a\ot h\mid
  a\in\Mat_2(\C)_+,\ \Phi(a)h=0\}.
\]
The map $\Phi$ is irreducible if its commutant consists of the scalar
operators.

\begin{lemma}\label{lem:kernel-span}
If $\Phi$ is unital and irreducible and $\dim N_\Phi=3s$, then it generates
an extreme ray of $\Pos(\Mat_2,B(H))$.
\end{lemma}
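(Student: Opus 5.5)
Suppose $\Phi=\Phi_1+\Phi_2$ with $\Phi_1,\Phi_2$ positive. The plan is to show that each $\Phi_i$ is a scalar multiple of $\Phi$; it suffices to do this for $\Psi=\Phi_1$, where $0\leqslant\Psi\leqslant\Phi$ in the positive-map order.

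\emph{Step 1: kernel inheritance.} If $a\geqslant0$ and $\Phi(a)h=0$, then $0\leqslant\Psi(a)\leqslant\Phi(a)$ gives $h^*\Psi(a)h=0$. Since $\Psi(a)\geqslant0$, this forces $\Psi(a)h=0$. So $T_\Psi$ vanishes on every generator of $N_\Phi$, and by linearity on all of $N_\Phi$.

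\emph{Step 2: dimension count.} $\dim(\Mat_2(\C)\ot H)=4s$ and $\dim N_\Phi=3s$, so the quotient $Q=(\Mat_2(\C)\ot H)/N_\Phi$ has dimension $s$. Both $T_\Phi$ and $T_\Psi$ vanish on $N_\Phi$, so they factor through $Q$. The map $I_2\ot h\mapsto[I_2\ot h]$ from $H$ to $Q$ is injective: if $I_2\ot h\in N_\Phi$, then applying $T_\Phi$ and using unitality gives $h=T_\Phi(I_2\ot h)=0$. By dimension count it is an isomorphism. Hence every $a\ot h$ is congruent modulo $N_\Phi$ to some $I_2\ot h'$, and the class of $I_2\ot h'$ is determined by $h'=T_\Phi(a\ot h)=\Phi(a)h$.

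Now put $R=\Psi(I_2)$. Then
\[
  \Psi(a)h=T_\Psi(a\ot h)=T_\Psi(I_2\ot\Phi(a)h)=R\,\Phi(a)h ,
\]
so $\Psi(a)=R\,\Phi(a)$ for all $a$.

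\emph{Step 3: commutant.} Since $\Psi$ and $\Phi$ are positive, they preserve adjoints on $\Her_2(\C)$. Taking adjoints in $\Psi(a)=R\Phi(a)$ for Hermitian $a$ gives $R\Phi(a)=\Phi(a)R^*$. With $a=I_2$ this yields $R=R^*$, so $R$ commutes with $\Phi(a)$ for every Hermitian $a$, and hence for all $a$ by complex linearity. Irreducibility then gives $R=\lambda I$, so $\Psi=\lambda\Phi$. Finally, $0\leqslant\Psi(I_2)\leqslant I$ gives $\lambda\in[0,1]$, so $\Phi$ generates an extreme ray.

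I expect Step 2 to be the main obstacle. The point is that the hypothesis $\dim N_\Phi=3s$ must be used to show that $I_2\ot H$ is a complement of $N_\Phi$, and this follows from unitality of $\Phi$ together with the injectivity argument. The other steps are routine.
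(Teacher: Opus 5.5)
Your proof is correct and is essentially the paper's argument: you use the same kernel inheritance, the same dimension count from unitality (the paper phrases it as $N_\Phi=\ker T_\Phi$, hence $T_{\Phi_i}=R_iT_\Phi$, which is equivalent to your observation that $I_2\ot H$ complements $N_\Phi$), and the same final step via self-adjointness of $R=\Psi(I_2)$ and irreducibility. One small redundancy is that $R=\Psi(I_2)\geqslant0$ is already Hermitian, so your adjoint computation at $a=I_2$ is not needed.
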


\begin{proof}
Since $\Phi$ is unital, $\dim\ker T_\Phi=3s$.  Hence
$N_\Phi=\ker T_\Phi$.  Suppose $\Phi=\Phi_1+\Phi_2$, where the $\Phi_i$ are
positive.  If $a\geqslant0$ and $\Phi(a)h=0$, then $\Phi_i(a)h=0$.  It follows
that $T_{\Phi_i}=R_iT_\Phi$ and $\Phi_i(a)=R_i\Phi(a)$ for some $R_i$.
Moreover, $R_i=\Phi_i(I_2)$ is selfadjoint.  For $a=a^*$, selfadjointness
gives $R_i\Phi(a)=\Phi(a)R_i$.  Irreducibility makes $R_i$ scalar.
\end{proof}

This factorization is the finite-dimensional core of Woronowicz's
nonextendibility method \cite{Woronowicz}.  Product-vector spanning criteria
for optimal entanglement witnesses appear in \cite{Lewenstein}.  A stronger
criterion for exposed positive maps is proved in \cite{CS}.

\section{Explicit symmetric-power construction}

Let $E=\C^2$ have orthonormal basis $(e_0,e_1)$.  Inner products are linear
in the second variable.  Define the coordinate conjugation $\kappa$ and the
linear unitary $J$ by
\[
  \kappa(z_0e_0+z_1e_1)=\ol{z_0}e_0+\ol{z_1}e_1,
  \qquad Je_0=e_1,\quad Je_1=-e_0,
\]
Put $j=J\kappa$.  Then $j$ is antiunitary, $j^2=-I$, and
\begin{equation}\label{eq:j-properties}
  \langle x,j(x)\rangle=0
  \quad(x\in E).
\end{equation}
For $k\geqslant0$, let $\Sym^k(E)\subseteq E^{\otimes k}$ be the symmetric
tensor power.  Thus $\dim\Sym^k(E)=k+1$.  Write $x^k=x^{\otimes k}$.  For
$k\geqslant1$, let
$\mu_k\colon E\ot\Sym^{k-1}(E)\to\Sym^k(E)$ be the restriction of the
orthogonal projection onto $\Sym^k(E)$, and put $\iota_k=\mu_k^*$.  Then
\begin{equation}\label{eq:pure-inclusion}
  \iota_k(x^k)=x\ot x^{k-1}
  \quad(x\in E).
\end{equation}
Fix $r\geqslant1$, set $n=2r+1$, and put
\[
  H=\Sym^n(E),
  \qquad Q=\Sym^{n-1}(E).
\]
Let $h_0,\ldots,h_n$ and $q_0,\ldots,q_{n-1}$ be the normalized Dicke bases
of $H$ and $Q$, indexed by the number of factors equal to $e_1$.  Define
$C_0,C_1\colon H\to Q$ by
\[
  C_0h_k=\sqrt{\frac{n-k}{n}}\,q_k,
  \qquad
  C_1h_k=\sqrt{\frac{k}{n}}\,q_{k-1},
\]
where nonexistent terms are zero.  Thus
\[
  \iota_nh=e_0\ot C_0h+e_1\ot C_1h
  \quad(h\in H).
\]
On $Q$, define the invertible Hermitian operator
\[
  \sigma q_k=s_kq_k,
  \qquad
  s_k=
  \begin{cases}
    1,&k\text{ even},\\
    -\frac13,&k\text{ odd}.
  \end{cases}
\]
Then $\intr(\sigma)=(r+1,r)$, where
$\intr(T)=(n_+(T),n_-(T),n_0(T))$ denotes inertia; for invertible operators
we omit the final zero entry.

Let $F(u\ot v)=v\ot u$ be the flip on $E\ot E$ and set
\[
  \rho
  =(I_E\ot\iota_n)^*(F\ot\sigma)(I_E\ot\iota_n)
  \in B(E\ot H).
\]
Relative to $E\ot H=(e_0\ot H)\oplus(e_1\ot H)$,
\[
  \rho=
  \begin{pmatrix}
    C_0^*\sigma C_0&C_1^*\sigma C_0\\
    C_0^*\sigma C_1&C_1^*\sigma C_1
  \end{pmatrix}.
\]
There are two one-dimensional blocks of value $1$, on
$\C(e_0\ot h_0)$ and $\C(e_1\ot h_n)$.  For $0\leqslant k<n$, its
restriction to
\[
  \spann\{e_0\ot h_{k+1},e_1\ot h_k\}
\]
is
\[
  D_k=\frac1n
  \begin{pmatrix}
    (n-k-1)s_{k+1}&\sqrt{(n-k)(k+1)}\,s_k\\[1mm]
    \sqrt{(n-k)(k+1)}\,s_k&ks_{k-1}
  \end{pmatrix},
\]
where endpoint terms are zero.  If $k$ is even, then
\[
  n^2\det D_k
  =\frac{k(n-k-1)}9-(n-k)(k+1)<0,
\]
and $D_k$ has inertia $(1,1)$.  If $k$ is odd, both diagonal entries are
positive, and
\[
  n^2\det D_k
  =k(n-k-1)-\frac{(n-k)(k+1)}9>0,
\]
because
\[
  \frac{(n-k)(k+1)}{k(n-k-1)}
  =\left(1+\frac1k\right)
   \left(1+\frac1{n-k-1}\right)
  \leqslant4<9.
\]
There are $r+1$ even indices and $r$ odd indices.  Therefore
\begin{equation}\label{eq:rho-inertia}
  \intr(\rho)=(3r+3,r+1),
\end{equation}
so $\rho$ is invertible.  Write $\rho^{-1}=(R_{ij})_{i,j=0}^1$ as a
$2\times2$ block matrix over $B(H)$, and define
\begin{equation}\label{eq:Phi-explicit}
  \Phi(E_{ij})=R_{ji}
  \qquad(0\leqslant i,j\leqslant1).
\end{equation}
Since $\rho^{-1}$ is Hermitian, $\Phi$ is $*$-linear.

For $n=3$ this is Woronowicz's symmetric-power construction
\cite[Section~6]{Woronowicz}, in Dicke coordinates.  The alternating diagonal
choice of $\sigma$ gives the same construction for every odd $n$.

\begin{proposition}
The map $\Phi\colon\Mat_2(\C)\to B(H)$ is positive, and for every
$0\neq x\in E$,
\begin{equation}\label{eq:rank-one-inertia}
  \intr\bigl(\Phi(xx^*)\bigr)=(n,0,1).
\end{equation}
After an invertible range congruence, it is unital.  The normalized map is
irreducible and spans an extreme ray of $\Pos(\Mat_2,B(H))$.
\end{proposition}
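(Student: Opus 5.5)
The plan is to read $\Phi(xx^*)$ as a compression of $\rho^{-1}$ and compute its inertia with a Haynsworth-type inertia formula. For $x=x_0e_0+x_1e_1$, \eqref{eq:Phi-explicit} gives $\Phi(xx^*)=\sum_{i,j}x_i\ol{x_j}R_{ji}$. This is the compression of $\rho^{-1}$ to $W_x=\ol x\ot H$, with the conjugation conventions checked against the stated Choi identity. Let $u\in E$ be a unit vector with $u\perp\ol x$, so that $W_x^\perp=u\ot H$.

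First I compute the compression of $\rho$ to $u\ot H$. Since $\langle u\ot a,F(u\ot b)\rangle=\langle a,u\rangle\langle u,b\rangle$, this compression equals
\[
  \iota_n^*(uu^*\ot\sigma)\iota_n=C_u^*\sigma C_u,
  \qquad C_u=(u^*\ot I_Q)\iota_n\colon H\to Q.
\]
The map $C_u$ is surjective, because it is a rotated copy of $C_0$ and $SU(2)$ acts on the Dicke picture. Its kernel is one-dimensional, spanned by the power $(ju)^n$ up to normalization, because \eqref{eq:pure-inclusion} and \eqref{eq:j-properties} give $C_u (ju)^n=\langle u,ju\rangle(ju)^{n-1}=0$. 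Hence
\[
  \intr(C_u^*\sigma C_u)=(r+1,r,1).
\]

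Next I apply the generalized Haynsworth/Jacobi complementarity for an invertible Hermitian $\rho$ and complementary subspaces $W,W^\perp$. It states that $n_0(\rho^{-1}|_W)=n_0(\rho|_{W^\perp})$ and $n_\pm(\rho^{-1}|_W)=n_\pm(\rho)-n_\pm(\rho|_{W^\perp})-n_0(\rho|_{W^\perp})$. I will prove this in a short lemma by perturbing or by a block Schur argument. Combined with \eqref{eq:rho-inertia} it gives $n_+=3r+3-(r+1)-1=n$, $n_-=r+1-r-1=0$ and $n_0=1$. This is exactly \eqref{eq:rank-one-inertia}, and positivity of $\Phi$ follows because every element of $\Mat_2(\C)_+$ is a sum of terms $xx^*$. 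For unitality, $\Phi(I_2)=\Phi(e_0e_0^*)+\Phi(e_1e_1^*)$ is a sum of two positive semidefinite corank-one operators. Their kernels are distinct, as the explicit kernel description below shows, so $\Phi(I_2)$ is invertible. Replacing $\Phi$ by $\Phi(I_2)^{-1/2}\Phi(\cdot)\Phi(I_2)^{-1/2}$ keeps positivity, the kernel structure up to a fixed invertible map, and extremality.

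For extremality I use \Cref{lem:kernel-span} with $s=n+1$. This requires an explicit kernel vector of $\Phi(xx^*)$. From the Schur-complement picture, $\ker\rho^{-1}|_{W_x}$ is the $W_x$-component of $\rho\bigl(u\ot(ju)^n\bigr)$. Writing this out through $\iota_n$ and $F\ot\sigma$ should give a vector $h_x\in H$ whose coefficients are polynomials of bidegree $(n,0)$ or $(n-1,1)$ in $(x,\ol x)$, with $\sigma$ weighting the Dicke coordinates. Then $N_\Phi\supseteq\spann\{xx^*\ot h_x\}$, and $N_\Phi\subseteq\ker T_\Phi$ has dimension at most $3s$, so I must show this span has dimension $3(n+1)=6r+6$. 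I would expand $xx^*\ot h_x$ in the basis $E_{ij}\ot h_k$ as a family of bihomogeneous polynomials in $(x,\ol x)$. The dimension of the span then equals the rank of the coefficient matrix, i.e.\ the dimension of a space of polynomials of bidegree about $(n+1,2)$ in $(x,\ol x)$. I would compute this rank using the weight grading under the diagonal torus: each weight space is small, and the rank reduces to nonvanishing of $2\times2$ or $3\times3$ determinants in the $s_k$. The alternating choice $s_k\in\{1,-\frac13\}$ should make these nonzero, just as it made $D_k$ nondegenerate. \textbf{This dimension count is the step I expect to be the main obstacle.} I would first verify it for $r=1$, i.e.\ Woronowicz's case, to calibrate the weight bookkeeping.

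Irreducibility then follows cheaply. Any $R$ in the commutant of the normalized map commutes with each $\Phi(xx^*)$, so it preserves the one-dimensional kernels and $Rh_x=\lambda_xh_x$. The vectors $h_x$ span $H$, and any $n+2$ of them in general position are linearly dependent with all coefficients nonzero. Comparing eigenvalues along such a dependency forces $\lambda_x$ to be constant, so $R$ is scalar. Together with \Cref{lem:kernel-span}, this gives the extreme ray.
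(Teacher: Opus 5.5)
Your inertia and positivity argument is the paper's own. You compress $\rho^{-1}$, compute the compression of $\rho$ to the slice $u\ot H$ as $C_u^*\sigma C_u$ with inertia $(r+1,r,1)$, and apply the complementarity formula; the paper cites Maddocks for that formula instead of proving it. Your explicit $\ker C_u=\C(ju)^n$ is a nice touch.

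There is one correction. $\sum_{i,j}x_i\ol{x_j}R_{ji}$ is the compression of $\rho^{-1}$ to $x\ot H$, not to $\ol x\ot H$. The Choi identity concerns $C_\Phi=\sum_{i,j}E_{ij}\ot R_{ji}$, which is the partial transpose of $\rho^{-1}$, not $\rho^{-1}$ itself. This does not affect the inertia, because every slice $u\ot H$ gives the same inertia. With your convention, though, the vector you extract lies in the kernel of $\Phi\bigl(\kappa(x)\kappa(x)^*\bigr)$, so your generators $xx^*\ot h_x$ would not lie in $N_\Phi$.

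The genuine gap is extremality. You never write down the kernel vector, yet three later steps depend on it: the count $\dim\spann\{xx^*\ot h_x\}=3(n+1)$, the spanning of $\{h_x\}$ used for irreducibility, and the distinctness of kernels used for invertibility of $\Phi(I_2)$. Your torus-weight plan ends in determinants you only expect to be nonzero, so the count is not established.

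The missing observation is that no projection onto $W_x$ is needed. We have $(I\ot\iota_n)(j(x)\ot x^n)=j(x)\ot x\ot x^{n-1}$, and the flip moves $x$ to the front. Hence
\[
  \rho\bigl(j(x)\ot x^n\bigr)=x\ot k_x,
  \qquad
  k_x=\mu_n\bigl(j(x)\ot\sigma(x^{n-1})\bigr)\neq0,
\]
holds exactly. It follows that $\Phi(xx^*)k_x=\langle x,j(x)\rangle x^n=0$. Moreover, since $\operatorname{vec}(xx^*)=\kappa(x)\ot x$, the invertible map $(I_E\ot\rho^{-1})(\operatorname{vec}\ot I_H)$ sends $xx^*\ot k_x$ to $\kappa(x)\ot J\kappa(x)\ot x^n$.

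Mixed polarization gives $\spann\{\kappa(x)^a\ot x^b\}=\Sym^a(E)\ot\Sym^b(E)$. The case $(a,b)=(2,n)$ gives the dimension $3(n+1)$ at once. The case $(a,b)=(1,n-1)$, together with invertibility of $J$ and $\sigma$ and surjectivity of $\mu_n$, gives $\spann\{k_x\}=H$. Also $k_{e_0}\in\C h_1$ and $k_{e_1}\in\C h_{n-1}$ are distinct lines because $n\geqslant3$, which settles invertibility of $\Phi(I_2)$.

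No weight bookkeeping is needed. The count uses nothing about the $s_k$ beyond invertibility of $\rho$ and $\sigma$; the alternating values matter only for the inertia of $\rho$. Once spanning is available, your irreducibility argument via a linear dependency with all coefficients nonzero works, and is a valid alternative to the paper's connectedness argument for projections.
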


\begin{proof}
Let $V_xh=x\ot h$ and $M_x=\rho^{-1}(x\ot H)$.  From
\eqref{eq:Phi-explicit},
\[
  \Phi(xx^*)=V_x^*\rho^{-1}V_x.
\]
The map
\[
  A_x\colon H\to M_x,
  \qquad A_xh=\rho^{-1}V_xh,
\]
is an isomorphism.  Since $\rho^{-1}$ is Hermitian,
\[
  \langle h,\Phi(xx^*)h'\rangle
  =\langle A_xh,\rho A_xh'\rangle.
\]
Thus $\Phi(xx^*)$ is congruent to $\rho|_{M_x}$.  For the Hermitian form
defined by $\rho$, the orthogonal complement of $M_x$ is $x^\perp\ot H$.
For $0\neq z\in E$, put
\[
  C_z=(\langle z,\mathord\cdot\rangle\ot I_Q)\iota_n.
\]
The map $C_z\colon H\to Q$ is onto and has one-dimensional kernel.  For
$z=e_0$ this follows from the Dicke basis.  The general case follows from
the unitary equivariance of $\iota_n$ under the natural actions on the
symmetric powers.  Also,
\[
  \langle z\ot h,\rho(z\ot h')\rangle
  =\langle C_zh,\sigma C_zh'\rangle,
\]
Thus the restricted form on $H$ is represented by $C_z^*\sigma C_z$.
Surjectivity preserves the positive and negative indices of $\sigma$, and
$\ker C_z$ contributes one zero direction.  Therefore
$\intr(\rho|_{z\ot H})=(r+1,r,1)$.  If a nondegenerate Hermitian form has
inertia $(N_+,N_-)$ and its restriction to a subspace has inertia $(a,b,c)$,
then its restriction to the form-orthogonal complement has inertia
$(N_+-a-c,N_--b-c,c)$; see \cite[Corollary~2.7]{Maddocks}.  For Hermitian
forms this follows by applying the real result to the real part.  All three
indices double.  The real and Hermitian orthogonal complements of a complex
subspace coincide, since testing against $s$ and $is$ forces the Hermitian
pairing with $s$ to vanish.  Apply this with \eqref{eq:rho-inertia} and
$z\in x^\perp$.  We obtain
\[
  \intr\bigl(\Phi(xx^*)\bigr)
  =(3r+3-(r+1)-1,\ r+1-r-1,\ 1)
  =(n,0,1).
\]
Since $\Mat_2(\C)_+$ is generated by its rank-one elements, $\Phi$ is
positive.

Set
\[
  k_x=\mu_n\bigl(j(x)\ot\sigma(x^{n-1})\bigr)\in H.
\]
By \eqref{eq:pure-inclusion} and the definition of $\rho$,
\[
  \rho(j(x)\ot x^n)=x\ot k_x.
\]
Since $\rho$ is invertible, $k_x\neq0$.  Equation \eqref{eq:j-properties}
gives $\Phi(xx^*)k_x=0$.
Equation \eqref{eq:rank-one-inertia} now gives
\[
  \ker\Phi(xx^*)=\C k_x.
\]
Let
\[
  N_0=\spann\{(xx^*)\ot k_x\mid x\in E\}
  \subseteq B(E)\ot H.
\]
Define $\operatorname{vec}\colon B(E)\to E\ot E$ by
$\operatorname{vec}(E_{ij})=e_j\ot e_i$.  Then
$\operatorname{vec}(xx^*)=\kappa(x)\ot x$.  The invertible map
\[
  (I_E\ot\rho^{-1})(\operatorname{vec}\ot I_H)
\]
sends the generators of $N_0$ to
\[
  \kappa(x)\ot j(x)\ot x^n
  =\kappa(x)\ot J\kappa(x)\ot x^n.
\]
By mixed polarization,
\[
  \spann\{\kappa(x)^a\ot x^b\mid x\in E\}
  =\Sym^a(E)\ot\Sym^b(E)
  \qquad(a,b\geqslant0).
\]
Indeed, an annihilating functional gives a polynomial which vanishes
identically in the four real coordinates of $x$; all its coefficients
vanish.  Since $J$ and $\sigma$ are invertible and $\mu_n$ is onto, the
cases $(a,b)=(2,n)$ and $(1,n-1)$ give
\[
  \dim N_0=3(n+1),
  \qquad
  \spann\{k_x\mid x\in E\}=H.
\]
The vectors $k_x$ span $H$, and $\dim H>1$.  Choose $x,y$ such that the
lines $\C k_x$ and $\C k_y$ are distinct.  The positive semidefinite
operators $\Phi(xx^*)$ and $\Phi(yy^*)$ have trivial common kernel.  Hence
$\Phi(xx^*+yy^*)>0$.  Since $xx^*+yy^*\leqslant cI_2$ for some $c>0$, it
follows that $R_0=\Phi(I_2)>0$.
Set
\[
  \widehat\Phi(a)
  =R_0^{-1/2}\Phi(a)R_0^{-1/2}.
\]
The map $\widehat\Phi$ is unital and positive.  Its kernel vectors
$\widehat k_x=R_0^{1/2}k_x$ span $H$.  The vectors
$(xx^*)\ot\widehat k_x$ span a $3(n+1)$-dimensional subspace of
$N_{\widehat\Phi}$.  Since
$N_{\widehat\Phi}\subseteq\ker T_{\widehat\Phi}$ and
$\dim\ker T_{\widehat\Phi}=3(n+1)$, equality holds.

Let $P$ be a projection commuting with every $\widehat\Phi(a)$.  It preserves
each kernel line $\C\widehat k_x$.  Thus
$P\widehat k_x=\varepsilon(x)\widehat k_x$ with
$\varepsilon(x)\in\{0,1\}$.  On the unit sphere of $E$, the function
\[
  \varepsilon(x)
  =\frac{\langle\widehat k_x,P\widehat k_x\rangle}
         {\|\widehat k_x\|^2}
\]
is continuous.  The unit sphere of $E$ is connected, so $\varepsilon$ is
constant.  Since the $\widehat k_x$ span $H$, either $P=0$ or $P=I_H$.
The commutant is therefore scalar.  Now \Cref{lem:kernel-span} proves
extremality.
\end{proof}

\begin{corollary}\label{cor:extreme-family}
For every $r\geqslant1$, there is a unital extreme positive map
$\Phi_r\colon\Mat_2(\C)\to\Mat_{2r+2}(\C)$.
\end{corollary}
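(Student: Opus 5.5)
The plan is to apply the preceding proposition with $n=2r+1$. Then $H=\Sym^n(E)$ has dimension $n+1=2r+2$. The proposition says that, after the invertible range congruence $a\mapsto R_0^{-1/2}\Phi(a)R_0^{-1/2}$ with $R_0=\Phi(I_2)>0$, the normalized map $\widehat\Phi\colon\Mat_2(\C)\to B(H)$ is unital and positive, and that it spans an extreme ray of $\Pos(\Mat_2,B(H))$. It remains only to transport $\widehat\Phi$ to matrices.

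Choose any orthonormal basis of $H$, for instance the Dicke basis $h_0,\ldots,h_n$. This gives a unitary $U\colon\C^{2r+2}\to H$. Define
\[
  \Phi_r(a)=U^*\widehat\Phi(a)U .
\]
Conjugation by a unitary is a unital linear isomorphism $B(H)\to\Mat_{2r+2}(\C)$. It maps positive semidefinite operators onto positive semidefinite matrices, and so does its inverse.

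Composition with it is therefore a linear bijection $\Pos(\Mat_2,B(H))\to\Pos(\Mat_2,\Mat_{2r+2})$. This bijection preserves sums and positive scalars, so it carries extreme rays to extreme rays. Hence $\Phi_r$ is positive, unital ($\Phi_r(I_2)=U^*I_HU=I_{2r+2}$), and extreme.

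No real obstacle remains, since all the substantive work is in the proposition. The one point to check is that extremality survives the normalization by $R_0^{-1/2}$. The proposition already asserts extremality for the normalized map itself, via \Cref{lem:kernel-span}. Independently, $a\mapsto R_0^{-1/2}\Phi(a)R_0^{-1/2}$ is induced by an invertible congruence, which preserves the cone of positive maps in both directions, so it also preserves extreme rays.

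Finally, $\Phi_r(I_2)$ is invertible and $2r+2$ is unbounded in $r$. Together with \Cref{lem:rank-obstruction} and the retracts of \Cref{lem:basic-retracts}, this is exactly what feeds \Cref{thm:main}.
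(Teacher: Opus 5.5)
Your proposal is correct and takes the same route as the paper: the corollary follows directly from the proposition with $n=2r+1$, so that $\dim H=n+1=2r+2$, and then $B(H)$ is identified with $\Mat_{2r+2}(\C)$ through an orthonormal basis. Your extra remark that the congruence by $R_0^{-1/2}$ preserves extreme rays is valid but not needed, since the proposition proves extremality for the normalized map itself.
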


\section{Proof of the main theorem}

Fix $d\in\N$.  Choose $r\geqslant1$ such that $2r+2>d$.  By
\Cref{cor:extreme-family} and \Cref{lem:rank-obstruction},
$\bigl(\Mat_2(\C)_+\bigr)^{\min}$ is not $d$-subhomogeneous.

The same conclusion holds for $\bigl(\Mat_k(\C)_+\bigr)^{\min}$,
$k\geqslant2$, and for $\Lor_m^{\min}$, $m\geqslant4$, by
\Cref{lem:basic-retracts,lem:positive-retract}.  Since $d$ was arbitrary,
the theorem follows.

\begin{remark}
The Lorentz threshold is sharp.  Since $\Lor_2$ is simplicial, its minimal
operator system is $1$-subhomogeneous.  The system $\Lor_3^{\min}$ has a
$2\times2$ realization and is $2$-subhomogeneous; see
\cite[Section~3.4]{BDN}.
\end{remark}

\end{document}